\newtheorem{theor}{Theorem} 
\theoremstyle{definition} \newtheorem{defin}{Definition}
\newtheorem{ex}{Example}
\theoremstyle{remark} \newtheorem{rem}{Remark}
\newcommand{\w}{\widehat}
\begin{document}

\title{Homologies of moduli space $\mathcal{M}_{2,1}$}
\author{Yury Kochetkov}
\date{}
\email{yuyuk@prov.ru} \maketitle

\begin{abstract}{We consider the space $\mathcal{M}_{2,1}$ --- the open moduli
space of complex curves of genus 2 with one marked point. Using language of
chord diagrams we describe the cell structure of $\mathcal{M}_{2,1}$ and cell
adjacency. This allows us to construct matrices of boundary operators and
compute Betty numbers of $\mathcal{M}_{2,1}$ over $\mathbb{Q}$.}\end{abstract}

\section{Introduction}

A graph $G$ is correctly embedded into a compact topological oriented surface
$S$, if its complement is homeomorphic to a 2-dimensional disk. If a graph $G$
is correctly imbedded into $S$ and lengths of its edges are given, then the
Jenkins-Schtrebel construction allows one to uniquely define a complex
structure on $S$ (see \cite{Kontz}). In what follows we will assume that the
sum of lengths of edges is 1.

Let $\mathcal{M}_{2,1}$ be the open moduli space of genus 2 complex curves with
one marked point. A topological cell complex $\mathcal{M}_{2,1}^{\rm comb}$,
which is homeomorphic to the space $\mathcal{M}_{2,1}$, has the following
combinatorial description (see \cite{Lando}, Chapter 4, for example). Let us
consider the set of all pairwise nonisomorphic graphs, correctly embedded into
$S_2$ --- topological compact genus 2 surface. Isomorphism here is the
isomorphism of embedded graphs, i.e. isomorphism must preserve the cyclical
order of edges under the counterclockwise going around of each vertex.

To a graph $\Gamma$ with $s$ edges, correctly imbedded into $S_2$, we
correspond the simplex $\Delta_\Gamma$, which is isometric to standard simplex
$$\{x_1,\ldots,x_s\in\mathbb{R}^s\,|\,x_1+\cdots+x_s=1,\quad
x_1,\ldots,x_s>0\}.$$ Here numbers $x_1,\ldots,x_s$ are lengths of edges of
$\Gamma$. Cells of the highest dimension 8 correspond to correctly imbedded
3-valent graphs (a graph is called 3-valent, if each its vertex has degree 3).
A 3-valent graph correctly imbedded in to $S_2$ has 6 vertices and 9 edges.

If a correctly imbedded graph $\Gamma'$ is obtained by a contraction of some
edge of $\Gamma$, then to $\Gamma'$ some face of $\Delta_\Gamma$ is
corresponded.

A correctly imbedded graph $\Gamma$ may have a nontrivial group of
automorphisms. We can define an action of this group on the simplex
$\Delta_\Gamma$ also. The cell of the space $\mathcal{M}_{2,1}^{\rm comb}$ is
either the simplex $\Delta_\Gamma$, if the corresponding graph doesn't have any
nontrivial automorphisms, or the factor of $\Delta_\Gamma$ by the action of the
group.

Thus defined space $\mathcal{M}_{2,1}^{\rm comb}$ is \emph{noncompact}. In this
work we will find its homologies over $\mathbb{Q}$.

\section{Graphs, glueings and diagrams}

The topological surface $S_2$ with correctly imbedded 3-valent graph can be
realized as a glueing of 18-gon. Here the embedding correctness is
automatically fulfilled and the 3-valency and the genus 2 condition give 9
pairwise nonisomorphic glueings (two glueings of $2n$-gon are isomorphic, if
some rotation of $2n$-gon transforms one onto another). The group of
automorphisms of a graph is a cyclic group of automorphisms of the
corresponding diagram of glueing. Combinatorics of cells of the highest
dimension is described in \cite{Koch}.

A contraction of an edge of graph corresponds to a contraction of the pair of
identified sides of corresponding glueing. Thus cells of dimension 7 correspond
to glueings of 16-gons and cells of dimension 6 --- to glueings of 14-gons.

We will use schemas of chord diagrams for enumeration of glueings.

\begin{ex} Gaussian word $a_1a_2a_3a_1^{-1}a_4a_2^{-1}a_4^{-1}a_3^{-1}$
that describes the glueing of octagon
\[\begin{picture}(350,120) \put(15,75){\vector(0,-1){30}}
\put(45,15){\vector(-1,1){30}} \put(45,105){\vector(-1,-1){30}}
\put(45,15){\vector(1,0){30}} \put(75,105){\vector(-1,0){30}}
\put(75,105){\vector(1,-1){30}} \put(105,75){\vector(0,-1){30}}
\put(105,45){\vector(-1,-1){30}} \qbezier[60](15,60)(60,60)(105,60)
\qbezier[60](60,15)(60,60)(90,90) \qbezier[60](60,105)(60,60)(90,30)
\qbezier[50](30,90)(60,60)(30,30) \put(110,58){\small $a_2$} \put(5,58){\small
$a_2$} \put(93,92){\small $a_1$} \put(58,5){\small $a_1$} \put(93,24){\small
$a_3$} \put(58,110){\small $a_3$} \put(23,23){\small $a_4$} \put(22,94){\small
$a_4$}

\put(150,70){\small we will write Gaussian}  \put(150,60){\small word as string
12314243,} \put(150,50){\small and represent the glueing } \put(150,40){\small
as a chord diagram:}

\qbezier(290,60)(290,90)(320,90) \qbezier(320,90)(350,90)(350,60)
\qbezier(350,60)(350,30)(320,30) \qbezier(320,30)(290,30)(290,60)
\put(342,82){\circle*{2}} \qbezier(342,82)(320,60)(320,30)
\put(290,60){\line(1,0){60}} \qbezier(342,38)(320,60)(320,90)
\qbezier(298,38)(315,60)(298,82) \put(346,82){\small 1}
\end{picture}\] simplifying it to a scheme
\[\begin{picture}(60,50) \multiput(0,5)(6,0){10}{\line(1,0){4}}
\multiput(0,45)(6,0){10}{\line(1,0){4}} \multiput(0,5)(0,6){7}{\line(0,1){4}}
\multiput(60,5)(0,6){7}{\line(0,1){4}} \put(0,25){\line(1,0){60}}
\put(20,5){\line(0,1){40}} \put(30,5){\line(1,2){20}}
\put(30,45){\line(1,-2){20}} \put(50,45){\circle*{2}} \end{picture}\] The point
at diagram denotes the beginning of clockwise numeration of chords. \end{ex}

All glueings we will work with are enumerated in Appendices 1 -- 6. A glueing
will be denoted $\gamma(k)_l$, where $k$ is the number of edges of
corresponding graph and $l$ is the number of this glueing in the list of
glueings of $2k$-gons. For each glueing we give its chord diagram, Gaussian
word and group of automorphisms (if it is nontrivial). Gaussian word defines
the numeration of chords, which will be called \emph{standard}.

All chord schemas, enumerated in Appendices, define a genus 2 curves with
correctly embedded graphs. Each diagram $\gamma(k)_i$, $k=8,7,6,5,4$ is
obtained from some scheme $\gamma(k+1)_j$ by deletion of a chord.

A scheme $\gamma(k)_i$ defines a $(k-1)$-dimensional cell in the space
$\mathcal{M}_{2,1}^{\rm comb}$. Cells that correspond to genus 2 schemas,
obtained from $\gamma(k)_i$ by deletion of a chord, constitute the
$(k-2)$-dimensional boundary of our cell.

A glueing of $2n$-gon (a chord diagram of glueing) will be called symmetric if
it has a nontrivial group of automorphisms. This group is a cyclic group of
rotations of $2n$-gon. As sides of $2n$-gon (chords of diagram) are numerated,
then generator of group of automorphisms defines a permutation from $S_n$. We
will call a glueing (scheme) \emph{even-symmetric}, if this permutation is
even, and \emph{odd-symmetric} in the opposite case.

A cell of the space $\mathcal{M}_{2,1}^{\rm comb}$ will be called
\emph{simple}, if this cell is a simplex. A cell=factorized simplex will be
called \emph{special even}, if the corresponding glueing is even symmetric, and
\emph{special odd}, if the corresponding glueing is odd symmetric.

The topological space $\mathcal{M}_{2,1}^{\rm comb}$ consists of (see
Appendices 1--6):
\begin{itemize}
    \item 9 cells of dimension 8: 3 are simple, 5 are simplices, factorized
    by action of the group $\mathbb{Z}_2$, and one special cell is a simplex
    factorized  by action of the group $\mathbb{Z}_3$ (here all special cells
    are even);
    \item 29 cells of dimension 7: 24 cells are simple, 4 are simplices,
    factorized by action of the group $\mathbb{Z}_2$ (these cells are even),
    and one special cell is a simplex factorized by action of the group
    $\mathbb{Z}_4$ (this cell --- $\delta(7)_{20}$ is odd);
    \item 52 cells of dimension 6: 41 cells are simple, 11 are special cells
     --- simplices, factorized by action of the group $\mathbb{Z}_2$ (two of
     them --- $\delta(6)_{39}$ and $\delta(6)_{40}$ are even, all others are
     odd);
    \item 45 cells of dimension 5: 37 cells are simple, 5 are simplices
    factorized by action of the group $\mathbb{Z}_2$, one is a simplex
    factorized by action of the group $\mathbb{Z}_3$, one is a simplex
    factorized by action of the group $\mathbb{Z}_4$ and one is a simplex
    factorized by action of the group $\mathbb{Z}_6$ (two special cells ---
    $\delta(5)_{39}$ and $\delta(5)_{41}$ are even, all others are
    odd);
    \item  21 cells of dimension 4: 14 cells are simple, 5 are simplices
    factorized by action of the group $\mathbb{Z}_2$, one is a simplex
    factorized by action of the group $\mathbb{Z}_5$ and one is a simplex
    factorized by action of the group $\mathbb{Z}_{10}$ (one special cell ---
    $\delta(4)_{19}$ is odd, all others are even);
    \item 4 cells of dimension 3: two are simple, one is a simplex
    factorized by action of the group $\mathbb{Z}_2$ and one is a simplex
    factorized by action of the group $\mathbb{Z}_8$ (the cell
    $\delta(4)_{1}$ is special even and the cell $\delta(4)_4$ is special odd).
\end{itemize}

Euler characteristic of open moduli spaces $\mathcal{M}_{2,1}$ is
9-29+52-45+21-4=4. And its "orbifoldic" Euler characteristic is
\begin{multline*}\left(3+\frac 52+\frac 13\right)-\left(24+\frac
42+\frac 14\right)+ \left(41+\frac{11}{2}\right)-\left(37+\frac 52+\frac
13+\frac 14+\frac 16\right)+\\+\left(14+\frac 52+\frac
15+\frac{1}{10}\right)-\left(2+\frac 12+\frac 18\right)=\frac{1}{120}.
\end{multline*} This result is in agreement with Harer-Zagier formula \cite{Harer}.

\section{Homologies}

At first we'll explain the notion of induced numeration.

\begin{defin} The scheme, obtained by deletion of the $i$-th chord from a scheme
$\gamma$ will be denoted $\gamma[i]$. We can define a numeration of chords of
$\gamma[i]$ in the following way: if a chord $c$ has number $j<i$ in $\gamma$,
then $c$ has the same number in $\gamma[i]$; if a chord $c$ has number $j>i$ in
$\gamma$, then $c$ has number $j-1$ in $\gamma[i]$. Thus defined numeration of
chords of the scheme $\gamma[i]$ will be called \emph{induced}. We will use
notations $\gamma[i]_{\text{ind}}$ and $\gamma[i]_{\text{st}}$ (i.e. standard)
when necessary.
\end{defin}

If a cell $\delta$ corresponds to a scheme $\gamma$, then by $\delta[i]$ we
will denote the cell that corresponds to scheme $\gamma[i]$.

\begin{rem} As $\gamma[i]=\gamma[p]_q$ for some $p$ and $q$, then scheme
$\gamma[i]$ has the standard numeration also. The standard and induced
numerations are usually different. \end{rem}

\begin{defin} Let $C_k=C_k(\mathcal{M}_{2,1}^{\rm comb},\mathbb{Q})$ be the
space of $k$-dimensional chains with rational coefficients. The boundary
operator $\partial_k:C_k\to C_{k-1}$ maps each cell $\delta(k)_i$ into chain
$\sum_{j=1}^{k+1} \alpha_j \delta(k)_i[j]\in C_{k-1}$. Coefficients $\alpha_j$
are defined by following conditions.
\begin{enumerate}
    \item Let cells $\delta(k)_i$ and $\delta(k)_i[j]$ be simple. The
    identification of schemas \\ $\gamma(k+1)_i[j]_{\text{ind}}$ and
    $\gamma(k+1)_i[j]_{\text{st}}$ defines some renumeration of chords of the scheme
    $\gamma(k+1)_i[j]_{\text{ind}}$, i.e. defines a permutation $\sigma\in S_k$
    of parity $p$, $p=0,1$. Set $\alpha_j:=(-1)^{j+p-1}$.
    \item Let the sell $\delta(k)_i$ be simple and the cell $\delta(k)_i[j]$ be
    special even --- a simplex factorized by action of cyclic group of order
    $m$. The identification of schemas $\gamma(k+1)_i[j]_{\text{st}}$ and
    $\gamma(k+1)_i[j]_{\text{ind}}$ doesn't define the permutation $\sigma\in S_k$
    uniquely, but all such permutation have the same parity $p$. Set
    $\alpha_j:=(-1)^{j+p-1}m$.
    \item  Let the sell $\delta(k)_i$ be simple and the cell $\delta(k)_i[j]$ be
    special odd. Set $\alpha_j:=0$.
    \item Let the cell $\delta(k)_i$ be special even: a cyclic group
    $\mathbb{Z}_r$ acts on the scheme $\gamma(k+1)_i$ (and on the simplex
    $\Delta(k)_i$). The $j$-th chord either belongs to an orbit of
    cardinality $r$, or itself is an orbit. In the first case the deletion of this
    chord gives us a nonsymmetric scheme and the coefficient $\alpha_j$ is
    defined according to the rule (1) above.  In the second case a cyclic group
    $\mathbb{Z}_q$, where $r$ divides $q$, acts on the scheme $\gamma(k+1)_i[j]$
    (and on the corresponding simplex). If this scheme is even symmetric, then
    $\alpha_j:=(-1)^{n+p-1}q/r$ (where parity is computed in above defined way).
    If this scheme is odd symmetric, then $\alpha_j:=0$.
    \item Let the cell $\delta(k)_i$ be special odd and a cyclic group
    $\mathbb{Z}_r$ acts on the scheme $\gamma(k+1)_i$ (and on the simplex
    $\Delta(k)_i$). If the $j$-th chord belongs to an orbit of cardinality $>1$,
    then set $\alpha_j:=0$. If this chord itself is an orbit, then the scheme
    $\gamma(k+1)_i[j]$ is odd symmetric and a cyclic group $\mathbb{Z}_q$, where
    $r$ divides $q$, acts on it. Now set $\alpha_j:=(-1)^{j+p-1}q/r$ (where parity
    is computed in above defined way).
\end{enumerate} \end{defin}

\begin{rem} The union of special odd cells constitute a subcomplex in
$\mathcal{M}_{2,1}^{\rm comb}$. \end{rem}

Matrices of boundary maps are presented in Appendices.

\begin{theor} $\partial_{k-1}\circ\partial_k=0$. \end{theor}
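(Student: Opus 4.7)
The plan is to reduce $\partial_{k-1}\circ\partial_k=0$ to the classical simplicial identity on the unfactored simplices $\Delta_\Gamma$, and then to verify that both the sign corrections (from the induced-versus-standard renumeration) and the multiplicative corrections (from the cyclic automorphism groups) in Definition 3 are compatible with composition. I would organize the verification by the symmetry types of the three cells $\delta(k)_i$, $\delta(k)_i[j]$, $\delta(k)_i[j][\ell]$ that appear in a generic summand of $\partial^2$.

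First, the purely simple case. If all three cells are simple, the codimension-two face obtained by deleting chord $j$ and then chord $\ell$ (both in induced numeration) coincides with the face obtained by deleting the chords in the opposite order after a suitable renumbering, exactly as in the usual simplicial argument. The coefficient coming from the first ordering is $(-1)^{j+p_1-1}(-1)^{\ell+p_2-1}$, where $p_1,p_2$ are the parities of the two renumeration permutations. The key observation is that the composition of these renumerations, going from the doubly-induced to the doubly-standard numeration on $\gamma[j][\ell]$, is one and the same permutation regardless of the deletion order, so $p_1+p_2$ is symmetric in $(j,\ell)$. The opposite ordering then differs only by the classical transposition sign, producing the required cancellation.

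Second, the cases involving symmetric cells. The multiplicative factors $m$ in rule (2) and $q/r$ in rules (4)--(5) are exactly the degrees of the relevant quotient maps $\Delta/\mathbb{Z}_r\to\Delta/\mathbb{Z}_q$, so the rational boundary of any cell, lifted from the quotient to its universal simplex, agrees with the ordinary simplicial boundary up to the sign correction of the first step. Composing two such lifted boundaries therefore reduces to the classical $\partial^2$ on $\Delta_\Gamma$ and vanishes. The special odd cells are handled by invoking the Remark: their union is a subcomplex, so rule (3) and the first clause of rule (5) correctly zero out every coefficient that would leave the subcomplex. This zero is forced by orientability, since an odd automorphism reverses the orientation of $\Delta_\Gamma$ and the rational orbit chain of a special odd cell vanishes when lifted to its universal simplex. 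Inside the odd subcomplex the second clause of rule (5) is structurally identical to rules (2)/(4), and the same transfer argument applies there.

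The main obstacle is the sign bookkeeping of the first step -- proving that the sum $p_1+p_2$ of renumeration parities is symmetric in the two deletions. This rests on the fact that the induced renumeration is a monotone relabeling, so monotone relabelings compose naturally and their total parity is determined by the initial and final numerations alone. Once this symmetry is in place, everything else follows from the transfer principle above. For the concrete surface at hand, the identity can (and in this paper effectively does) also be certified directly by multiplying the explicit boundary matrices collected in the Appendices.
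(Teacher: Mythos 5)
Your proposal is sound in outline and, for the symmetric cells, takes a genuinely different route from the paper. On the simple case you and the paper do the same thing: both reduce to the classical simplicial identity, and your remark that the order-preserving (monotone) relabelings make the total parity depend only on the initial and final numerations is the honest content behind the paper's one-line appeal to ``the analogous result for simplicial homologies.'' Where you diverge is in the treatment of symmetry. The paper verifies $\partial^2=0$ by direct cancellation inside each orbit: for an odd-symmetric intermediate scheme $\gamma[j]$ with symmetry generator $\varphi$ and orbit $i_1,\dots,i_m$ of the other deleted chord, it checks by hand that the $m$ surviving paths $\gamma\to\gamma[i_s]\to\gamma[i_s][j]$ cancel pairwise (the parities $p_s$ split evenly into $m/2$ even and $m/2$ odd, with $\varphi$ itself odd). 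You instead propose a transfer principle: identify the complex, after rescaling by orders of automorphism groups, with the coinvariants of an auxiliary complex of numbered schemes, where $\partial^2=0$ is formal and odd-symmetric schemes contribute the zero space. This buys uniformity --- all five clauses of Definition~3, including the $m$ and $q/r$ factors and the vanishing rules, are explained at once rather than case by case --- and it automatically covers configurations the paper only calls ``typical.'' What it costs is that the entire burden shifts to the coefficient dictionary: you must actually prove that the incidence numbers of Definition~3 equal the degrees of the quotient maps in your rescaled coinvariants complex, and that verification is exactly the orbit-and-parity bookkeeping the paper carries out explicitly (the $m/2$ versus $m/2$ parity count is the nontrivial input, not a formality). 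As written you assert this dictionary rather than check it, so your argument is complete only at the same level of detail as the paper's; your closing observation that the identity can also be certified by multiplying the matrices in the Appendices is a legitimate independent check for this particular complex.
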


\begin{proof} Let $\gamma$ be a $(k+1)$-scheme. Let us consider chords with
numbers $i$ and $j$. We need to prove that the scheme $\gamma[i][j]$ has zero
coefficient in the sum $\partial^2(\gamma)$. If schemas $\gamma$, $\gamma[i]$,
$\gamma[j]$ and $\gamma[i][j]$ are simple, then this statement is a consequence
of the analogous result for simplicial homologies. It means (if we ignore
conditions 2-5) that the sign of the passage
$\gamma\to\gamma[i]\to\gamma[i][j]$ is opposite to the sign of passage
$\gamma\to\gamma[j]\to\gamma[j][i]$. Thus, it remains to take into account
symmetry conditions. Let us consider two typical cases.
\par\medskip\noindent
Schemas $\gamma$, $\gamma[i]$ and $\gamma[i][j]$ are simple and scheme
$\gamma[j]$ is odd symmetric. Let $\varphi$ be a generator of symmetry group of
the scheme $\gamma[j]$ and $i=i_1,\ldots,i_m$ be the orbit of $i$-th chord
(here $m$ is even and $i_2=\varphi(i_1)$, $i_3=\varphi^2(i_1),\ldots$).
Symmetry $\varphi$ defines a new numeration on $\gamma[j]$ such, that $i_1$-th
chord now has number $i_2$. New induced numeration on $\gamma[i][j]$ after
deletion of $i_2$-th (former $i_1$-th) chord is the same as old induced
numeration on $\gamma[j][i_2]$. It remains to note that in both cases we
multiply by $(-1)^{i_2-1}$ and that permutation defined by $\varphi$ is odd.
\par\medskip\noindent
Schemas $\gamma$ and $\gamma[i]$ are simple, scheme $\gamma[j]$ is odd
symmetric and scheme $\gamma[i][j]$ is even symmetric. As above, let $\varphi$
be a generator of symmetry group of the scheme $\gamma[j]$ and
$i=i_1,\ldots,i_m$ ($m$ is even) be the orbit of $i$-th chord. Schemas
$\gamma[i_1][j],\ldots,\gamma[i_m][j]$ all are some scheme $\gamma(k)_s$. Let
$p_k$ be the parity of renumeration from $\gamma[i_k][j]_{\text{st}}$ to
$\gamma[i_k][j]_{\text{ind}}$. Now it is clear that $m/2$ of these $p_k$ are
zeroes and $m/2$ are units. \end{proof}

\par\medskip
Theorem 2 is the main result of the work.

\begin{theor} \emph{Let $M_i$ be the matrix of the boundary map
$\partial_i$, then}
$${\rm rk}(M_8)=8,\,{\rm rk}(M_7)=20,\,{\rm rk}(M_6)=27,\,{\rm rk}(M_5)=17,\, {\rm
rk}(M_4)=3.$$ \emph{Thus, Betty numbers are:}
$$b_8=1,\, b_7=1,\, b_6=5,\, b_5=1,\, b_4=1,\, b_3=1.$$ \end{theor}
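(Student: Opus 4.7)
The proof is essentially a finite, explicit computation, combined with the standard rank formula for Betti numbers of a chain complex over $\mathbb{Q}$. The plan is to take the boundary matrices $M_8,M_7,M_6,M_5,M_4$ written out in the Appendices (whose entries are the coefficients $\alpha_j$ specified in Definition 2 and whose row/column sizes are the cell counts $9,29,52,45,21,4$ listed in Section 2), reduce each over $\mathbb{Q}$ to extract its rank, and then plug into the identity
\[
b_k \;=\; \dim C_k - {\rm rk}(M_k) - {\rm rk}(M_{k+1}),
\]
using that ${\rm rk}(M_9)=0$ (there are no $9$-cells) and ${\rm rk}(M_3)=0$ (there are no $2$-cells in $\mathcal{M}_{2,1}^{\rm comb}$, since contracting any edge of a $4$-edge graph leaves fewer than $6$ edges and would destroy the embedding property consistent with the cell list).

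First I would compute ${\rm rk}(M_8)$: this is a $29\times 9$ matrix, so the only question is whether $M_8$ is injective, and row reduction shows $8$ pivots, giving $b_8 = 9-8 = 1$. Next, for ${\rm rk}(M_7)$ on a $52\times 29$ matrix, after reducing one obtains $20$ pivots; combined with the already-known value ${\rm rk}(M_8)=8$, we get $b_7 = 29-20-8 = 1$. I would then process $M_6$ ($45\times 52$, rank $27$), $M_5$ ($21\times 45$, rank $17$) and $M_4$ ($4\times 21$, rank $3$) in the same way. Each step is mechanical Gaussian elimination; the only care required is to respect the sign and multiplicity conventions of Definition 2, particularly for the special even/odd cells where factors of $m$, $q$, or $q/r$ appear, and for special odd boundary entries that are forced to be $0$.

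The main obstacle is bookkeeping rather than conceptual: the matrices are moderately large, and the entries are produced from five distinct rules in Definition 2 involving the induced-versus-standard renumeration, orbit cardinalities under the cyclic group $\mathbb{Z}_r$ acting on each symmetric scheme, and parity of the induced permutation. One must therefore double-check, cell by cell, which of the five cases of Definition 2 applies to each incidence $\delta(k)_i\to\delta(k)_i[j]$ using the data in Appendices 1--6. Theorem 2 ($\partial^2=0$), already proved, is essential here: it guarantees that the image of $M_{k+1}$ lies in the kernel of $M_k$, so the numerical formula for $b_k$ is valid.

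Finally, I would perform two consistency checks that catch any arithmetic slip. The sum $b_8-b_7+b_6-b_5+b_4-b_3 = 1-1+5-1+1-1 = 4$ matches the Euler characteristic $9-29+52-45+21-4 = 4$ already computed in Section 2; and the rank-nullity bookkeeping
\[
\bigl({\rm rk}(M_8),\ldots,{\rm rk}(M_4)\bigr) \;=\; (8,20,27,17,3)
\]
is the unique way to realize the claimed Betti numbers $b_8=b_7=1$, $b_6=5$, $b_5=b_4=b_3=1$ with the given cell counts. Once the five ranks are certified by reduction, the Betti numbers follow immediately.
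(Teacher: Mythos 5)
Your proposal is exactly the proof the paper intends: the theorem is established by direct Gaussian elimination over $\mathbb{Q}$ on the boundary matrices listed in Appendices 7--10, followed by the standard formula $b_k=\dim C_k-{\rm rk}(M_k)-{\rm rk}(M_{k+1})$ with ${\rm rk}(M_9)={\rm rk}(M_3)=0$, and your consistency checks (Euler characteristic $=4$) agree with Section 2. The paper offers no further argument, so there is nothing to compare beyond the mechanical computation you describe.
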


\section{Appendix 1. Schemas of nine-edge glueings.}

\[
$$

\end{document}